\newtheorem{theorem}{Theorem}
\newtheorem{lemma}{Lemma}
\newtheorem{corollary}{Corollary}
\newtheorem*{theorem*}{Theorem}
\newtheorem*{proposition*}{Proposition}
\newtheorem*{metathm}{Meta-Theorem}
\theoremstyle{definition}
\newtheorem{definition}{Definition}
\theoremstyle{remark}
\newtheorem{remark}{Remark}
\newcommand{\norm}[1]{\left\Vert#1\right\Vert}
\newcommand{\set}[1]{\left\{#1\right\}}
\newcommand{\ip}[1]{\left\langle#1\right\rangle}
\begin{document}





\title[Boundary Representations of Positive Matrices]{A Characterization of Boundary Representations of Positive Matrices in the Hardy Space via the Abel Product}
\author{John E. Herr}
\address[John E. Herr]{Department of Mathematics, Butler University, Indianapolis, Indiana 46208}
\email{jeherr@butler.edu}
\author{Palle E. T. Jorgensen}
\address[Palle E.T. Jorgensen]{Department of Mathematics, University of Iowa, Iowa City, IA 52242}
\email{palle-jorgensen@uiowa.edu}
\author{Eric S. Weber}
\address[Eric S. Weber]{Department of Mathematics, Iowa State University, 396 Carver Hall, Ames, IA 50011}
\email{esweber@iastate.edu}
\subjclass[2010]{Primary: 46E22, 30B30; Secondary 42B05, 28A25}
\keywords{Positive matrix; reproducing kernel; spectral measures; Hardy space; Abel summation}
\date{\today}
\begin{abstract}
Spectral measures give rise to a natural harmonic analysis on the unit disc via a boundary representation of a positive matrix arising from a spectrum of the measure.  We consider in this paper the reverse: for a positive matrix in the Hardy space of the unit disc we consider which measures, if any, yield a boundary representation of the positive matrix.  We prove a characterization of those representing measures via a matrix identity by introducing a new operator product called the Abel Product.
\end{abstract}
\maketitle

\section{Introduction}

\subsection{The Szeg\H{o} Kernel}  
The classical Hardy space $H^2(\mathbb{D})$ consists of those holomorphic functions $f$ defined on $\mathbb{D}$ satisfying
\begin{equation}\label{Hardy}\lVert f\rVert^2_{H^2}:=\sup_{0<r<1}\int_{0}^{1}\lvert f(re^{2\pi ix})\rvert^2\,dx<\infty.\end{equation}
In addition, for each $f\in H^2(\mathbb{D})$, there exists a (unique) function $f^\ast\in L^2(\mathbb{T})$, which we shall call the Lebesgue boundary function of $f$, such that
\begin{equation}\label{LebBoundary}\lim_{r\rightarrow1^{-}}\int_{0}^{1}\lvert f(re^{2\pi ix})-f^\ast(e^{2\pi ix})\rvert^2\,dx=0.\end{equation}
Because the point-evaluation functionals on the Hardy space are bounded, the Hardy space is a reproducing kernel Hilbert space. Its kernel is the classical Szeg\H{o} kernel $k(w,z)=:k_w$, defined by
$$k_w(z):=\frac{1}{1-\overline{w}z}.$$
Because the mapping $f \mapsto f^{\ast}$ is an isometry, we have
\begin{equation}\label{szegorep}k(w,z):=\int_{0}^{1}k_w^\ast(e^{2\pi ix})\overline{k_z^\ast(e^{2\pi ix})}\,dx.\end{equation} 
Equation $\eqref{szegorep}$ shows that the Szeg\H{o} kernel reproduces itself with respect to what is, by some definition, its boundary. The measure on the circle used to define $k_z^\ast$ in \eqref{LebBoundary} is Lebesgue measure, as is the measure in $\eqref{szegorep}$. The intent of this paper is to show that among the functions in the Hardy space, there are a host of other kernels that reproduce with respect to their boundaries. However, these boundary functions will not be taken with respect to Lebesgue measure, but with respect to some other  measure on the circle $\mathbb{T}$, and the integration of these boundary functions will also be done with respect to this measure. Of interest are two main questions: which positive matrices does the Hardy space contain that reproduce themselves by boundary functions with respect to a given measure, and with respect to which measures will a positive matrix reproduce itself by boundary functions?  In \cite{HJW16a}, we focused on the first question whereas in the present paper we focus on the second.

\subsection{Boundary Representations}  The boundary behavior of functions in the Hardy space arises in a number of contexts, such as the spectral theory of the shift operator \cite{Clark72}, de Branges-Rovnyak spaces \cite{dBR66a,Sar94}, and ``pseudo-continuable'' functions \cite{Aleks89a,Pol93}

The boundary behavior of kernels in reproducing kernel Hilbert spaces has been considered by others, see for example \cite{KN77a} in the study of the Markov moment problem.  In the series \cite{AD84a,AD85a,ADLW06a}, much of the theory is developed under the a priori assumption that a reproducing kernel Hilbert space ``sits isometrically'' within an $L^2$ space, by which the authors mean that the elements of the Hilbert space possesses an $L^2$ boundary as we define below and the two norms coincide.  However, the authors also begin with a measure on the boundary and define a reproducing kernel by integrating the Cauchy kernel against the measure \cite[Lemma 6.4]{AD84a}.  In the previous paper \cite{HJW16a}, we extend this result to construct many kernels which reproduce themselves with respect to a fixed singular measure using the Kaczmarz algorithm \cite{KwMy01,HW17a}.  Other boundary representations were introduced in \cite{DJ11} to understand the nature of spectral measures.  In \cite{DJ11}, any spectral measure gives rise to a positive matrix on $\mathbb{D}$ which reproduces itself on the boundary with respect to the spectral measure.  As demonstrated in \cite{HJW16a}, however, the measure need not be spectral.

\begin{remark}In this paper, we will be dealing with measures $\mu$ on the unit circle. The unit circle $\mathbb{T}:=\{z\in\mathbb{C}:\lvert z\rvert=1\}$ and its topology shall be identified with $[0,1)$ via the relation $\xi=e^{2\pi ix}$ for $\xi\in\mathbb{T}$ and $x\in[0,1)$. We will regard the measures $\mu$ as being supported on $[0,1)$. A function $f(\xi)$ defined on $\mathbb{T}$ (for example, a boundary function) may be regarded as being in $L^2(\mu)$ if $f(e^{2\pi ix})\in L^2(\mu)$. For aesthetics, the inner product (norm) in $L^2(\mu)$ will be denoted $\langle\cdot,\cdot\rangle_{\mu}$ ($\lVert\cdot\rVert_{\mu}$) rather than $\langle\cdot,\cdot\rangle_{L^2(\mu)}$ ($\lVert\cdot\rVert_{L^2(\mu)}$). The subscript will be suppressed where context suffices.  A measure $\mu$ will be called \emph{singular} if it is a Borel measure that is singular with respect to Lebesgue measure.
\end{remark}

\begin{definition} \label{D:boundary} If $\mu$ is a finite Borel measure on $[0,1)$ and $f(z)$ is an analytic function on $\mathbb{D}$, we say that $f^{\star}\in L^2(\mu)$ is a (weak) $L^2(\mu)$-boundary function of $f$ if
\begin{equation*}\lim_{r\rightarrow1^-} f(re^{2\pi ix}) = f^{\star}(x)\end{equation*}
in the weak topology of $L^2(\mu)$.  
\end{definition}

\begin{remark}
In the pseudo-continuable functions literature, as well as the boundary representation literature, the limit is required to exist in norm.  For our purposes, the weak limit is more natural--we shall drop the descriptive ``weak'', though for the remainder of the paper, in all cases the boundary is a weak boundary as in Definition \ref{D:boundary}.  If a function possesses an $L^2(\mu)$-boundary function, then clearly that boundary function is unique as an element of $L^2(\mu)$. The $L^2(\mu)$-boundary function of a function $f:\mathbb{D}\rightarrow\mathbb{C}$ shall be denoted $f^{\star}_\mu$, but we omit the subscript when context precludes ambiguity.
\end{remark}

\begin{definition}A positive matrix (in the sense of E.~H.~Moore) on a domain $E$ is a function $K(z,w):E\times E\rightarrow\mathbb{C}$ such that for all finite sequences $\zeta_1,\zeta_2,\ldots,\zeta_n\in E$, the matrix
$$(K(\zeta_j,\zeta_i))_{ij}$$
is positive semidefinite. 
\end{definition}

Our interest is in positive matrices on $E=\mathbb{D}$, and more specifically those residing in $H^2(\mathbb{D})$.  We therefore desire to find subspaces of the Hardy space that not only are Hilbert spaces with respect to the $L^2(\mu)$-boundary norm, but are in fact reproducing kernel Hilbert spaces with respect to this norm. The classical Moore-Aronszajn Theorem connects positive matrices to reproducing kernel Hilbert spaces \cite{Aron50a} (see also \cite{PR16a}).

Let us then define two sets of interest:

\begin{definition}Let $\mu$ be a Borel measure on $[0,1)$. We define $\mathcal{K}(\mu)$ to be the set of positive matrices $K$ on $\mathbb{D}$ such that for each fixed $z\in\mathbb{D}$, $K(w,\cdot)$ possesses an $L^2(\mu)$-boundary $K^{\star}(w,\cdot)$, and $K(w,z)$ reproduces itself with respect to integration of these $L^2(\mu)$-boundaries, i.e.
\begin{equation} \label{Eq:reproduce}
K(w,z)=\int_{0}^{1}K^{\star}(w,x)\overline{K^{\star}(z,x)}\,d\mu(x)
\end{equation}
for all $z,w\in\mathbb{D}$.\end{definition}

\begin{definition}Let $K$ be a positive matrix on $\mathbb{D}$. We define $\mathcal{M}(K)$ to be the set of nonnegative Borel measures $\mu$ on $[0,1)$ such that for each fixed $z\in\mathbb{D}$, $K(w, \cdot)$ possesses an $L^2(\mu)$-boundary $K^{\star}(w, \cdot)$, and $K(w,z)$ reproduces itself with respect to integration of these $L^2(\mu)$-boundaries as in Equation (\ref{Eq:reproduce}).\end{definition}

\subsection{Kernels from a Coefficient Matrix}

Let $C = (c_{mn})_{mn}$ be an infinite matrix, where $m,n \geq 0$.  We consider the formal power series
\begin{equation} \label{Eq:KC}
K_{C}(w, z) = \sum_{n=0}^{\infty} \sum_{m=0}^{\infty} c_{mn} \overline{w}^{m} z^{n}.
\end{equation}
If the matrix $C$ defines a positive bounded linear operator on $\ell^{2}(\mathbb{N}_{0})$, then $K_{C}$ is a positive matrix in the sense of E.H. Moore, and thus defines a reproducing kernel Hilbert space.  Reproducing kernels of this form were considered in detail in  \cite{AMP92a,AMP94a}.  Other reproducing kernel Hilbert spaces in the Hardy space were considered by de Branges and Rovnyak in \cite{dBR66a} (see also \cite{Sar94}) leading to de Branges' solution of the Bieberbach conjecture \cite{dB85a}.  They were also considered in \cite{LP11a} in an attempt to solve the Kadison-Singer problem/Feichtinger conjecture, which has now been resolved \cite{MSS15a}.

For a given $C$ which defines a positive matrix as in Equation (\ref{Eq:KC}), we wish to determine which Borel measures on $\mathbb{T}$, if any, are in $\mathcal{M}(K_{C})$.  We shall approach the question via the following heuristic:

\begin{metathm}
A measure $\mu$ is in $\mathcal{M}(K_{C})$ if and only if the matrix equation $C = CMC$ is satisfied, where the matrix $M = ( \hat{\mu}(n-m) )_{mn}$.
\end{metathm}

We describe this as a meta-theorem for the following reason: since $M$ as a matrix is in general not a bounded operator on $\ell^2(\mathbb{N}_{0})$, the expression $CMC$ may not be well-defined.  We established in \cite{HJW17b} the meta-theorem for the two special cases: i) for diagonal matrices $C$, and ii) for $C$ and $\mu$ for which $M$ are bounded operators on $\ell^{2}(\mathbb{N}_{0})$.  (A description of those $\mu$ that have the property that $M$ is bounded is found in \cite{Cas00a,DHSW11}, see also \cite{Lai12,DL14a}.)  In both cases, the matrix product $CMC$ exists.  We consider in the present paper the possibility that $CMC$ does not exist.  We do so by introducing a new matrix operation we call the Abel product, which allows in some cases a definition for a matrix product when the ordinary matrix product does not exist, in analogy to Abel summation allows the definition of the sum of a series which may not converge in the ordinary sense.

Let $C$ be a bounded, positive operator from $\ell^2$ to itself, represented as an infinite matrix by $C=(c_{mn})_{mn}$. Define $K_C(w,z):\mathbb{D}\times\mathbb{D}\to\mathbb{C}$ as in Equation (\ref{Eq:KC});  we can for convenience write 
\begin{align*}
K_C(w,z) &=\sum_{n=0}^{\infty}\sum_{m=0}^{\infty}c_{mn}\overline{w}^mz^n \\
&=\sum_{m=0}^{\infty}\sum_{n=0}^{\infty}c_{mn}\overline{w}^mz^n\\
&=\ip{\left(\sum_{n=0}^{\infty}c_{mn}z^n\right)_{m},\vec{w}}_{\ell^2}\\
&=\ip{C \vec{z}, \vec{w} }_{\ell^2}
\end{align*}
where for $z \in \mathbb{D}$, $\vec{z} := (z^{n})_{n} \in \ell^{2}$.  Note that we used absolute convergence in the second line above.

Given a sequence of vectors $\set{x_n}_{n=0}^{\infty}$ in a Hilbert space $\mathbb{H}$, we define the synthesis operator of $\set{x_n}$, $S_{x}:\mathbb{C}^{\mathbb{N}}\to\mathbb{H}$ by
$$S_{x} [(c_n)_{n}] =\sum_{n=0}^{\infty}c_nx_n$$
where the convergence on the right is in the norm of $\mathbb{H}$ and the analysis operator of $\set{x_n}$, $A_{x}:\mathbb{H}\to\mathbb{C}^\mathbb{N}$ by 
$$A_{x}[f]= (\ip{f,x_n}_{\mathbb{H}})_{n=0}^{\infty}.$$
(It is understood that the domain of $S_x$ is not $\mathbb{C}^{\mathbb{N}}$ itself, but rather a subset thereof on which the series converges. Depending on the situation, $S_x$ and $A_x$ are understood to have smaller domains and codomains than those above.) 

Since $C$ is a positive matrix, it is the Gramian of some sequence of vectors. That is to say, there exists some sequence of vectors $\set{x_n}_{n=0}^{\infty}$ in some Hilbert space $\mathbb{H}$ such that $$C=\left(\ip{x_m,x_n}_{\mathbb{H}}\right)_{mn}.$$
Observe that 
$$A_xS_x [(c_{n})_{n}]=A_x\left(\sum_{n=0}^{\infty}c_nx_n\right)= \left( \sum_{n=0}^{\infty}c_n\ip{x_n,x_m}_{\mathbb{H}} \right)_{m}.$$
Thus, the composition $A_xS_x=\left(\ip{x_n,x_m}\right)_{mn}$ as an operator on sequences is the transpose of the Gramian of the $\set{x_n}$. It follows that $C$ can be realized by some sequence $\set{x_n}\subset\mathbb{H}$ as $C=(A_xS_x)^T.$

We denote by $S_{\overline{e}}$ and $A_{\overline{e}}$ the synthesis and analysis operators for $\{ \overline{e_{n}} \} \subset L^2(\mu)$, respectively.  Note that $\overline{e_{n}} = e_{-n}$.  It is easily seen that $(A_{\overline{e}} S_{\overline{e}})^{T} = A_{e} S_{e}$, and therefore the matrix $M = ( \hat{\mu}(n-m) )_{mn}$, which is the Grammian matrix of the $\{ e_{n} \}_{n=0}^{\infty} \subset L^2(\mu)$, can be factored as 
\begin{equation} \label{Eq:MAS}
M = A_{\overline{e}} S_{\overline{e}}
\end{equation}
as a matrix.  We can also formalize the factorization in Equation (\ref{Eq:MAS}) as follows.

\begin{lemma} \label{L:Mbounded}
The mappings 
\[ S_{e} : \ell^{1} \to L^2(\mu) : (c_n)_{n} \mapsto \sum_{n=0}^{\infty} c_{n} e_{n} \]
and 
\[ A_{e} : L^{2}(\mu) \to \ell^{\infty} : f \mapsto ( \langle f, e_{n} \rangle_{\mu} )_{n} \]
are bounded operators.  Likewise for $S_{\overline{e}}$ and $A_{\overline{e}}$.  Consequently, the matrix $M$ defines a bounded operator from $\ell^{1}$ to $\ell^{\infty}$ and $M = A_{\overline{e}} S_{\overline{e}}$.
\end{lemma}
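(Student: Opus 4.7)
My plan is to verify the three bounded-operator claims directly from the trivial bound $\|e_n\|_\mu = \|\overline{e_n}\|_\mu = \sqrt{\mu([0,1))}$, and then to check that the matrix identity $M = A_{\overline{e}} S_{\overline{e}}$ is realized as an honest composition of bounded operators from $\ell^1$ into $\ell^\infty$.

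First I would prove boundedness of $S_e$. For $(c_n) \in \ell^1$ and any partial sum $\sum_{n=N}^{M} c_n e_n$, the triangle inequality in $L^2(\mu)$ gives
\begin{equation*}
\Bigl\| \sum_{n=N}^{M} c_n e_n \Bigr\|_\mu \le \sum_{n=N}^{M} |c_n|\,\|e_n\|_\mu = \sqrt{\mu([0,1))}\sum_{n=N}^{M} |c_n|,
\end{equation*}
so the series $\sum c_n e_n$ is Cauchy in $L^2(\mu)$ and defines an element $S_e[(c_n)]$ with $\|S_e[(c_n)]\|_\mu \le \sqrt{\mu([0,1))}\,\|(c_n)\|_{\ell^1}$. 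The same argument applies verbatim to $S_{\overline{e}}$.

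Next, for $A_e$ and $f \in L^2(\mu)$, Cauchy--Schwarz yields
\begin{equation*}
|\langle f, e_n \rangle_\mu| \le \|f\|_\mu\,\|e_n\|_\mu = \sqrt{\mu([0,1))}\,\|f\|_\mu,
\end{equation*}
uniformly in $n$, so $A_e[f] \in \ell^\infty$ with $\|A_e[f]\|_\infty \le \sqrt{\mu([0,1))}\,\|f\|_\mu$. Again the same proof works for $A_{\overline{e}}$.

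Finally, composing the bounded maps $S_{\overline{e}}\colon \ell^1 \to L^2(\mu)$ and $A_{\overline{e}}\colon L^2(\mu) \to \ell^\infty$ yields a bounded operator $\ell^1 \to \ell^\infty$. To identify it with $M$, I would expand, using $L^2(\mu)$-convergence of $S_{\overline{e}}[(c_n)]$ to swap the sum past the continuous functional $\langle \cdot, \overline{e_m}\rangle_\mu$:
\begin{equation*}
(A_{\overline{e}} S_{\overline{e}}[(c_n)])_m = \Bigl\langle \sum_{n=0}^\infty c_n \overline{e_n}, \overline{e_m}\Bigr\rangle_\mu = \sum_{n=0}^{\infty} c_n \langle \overline{e_n}, \overline{e_m}\rangle_\mu = \sum_{n=0}^\infty \hat\mu(n-m)\,c_n,
\end{equation*}
which is precisely the $m$-th row of $M$ applied to $(c_n)$. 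The only mild point to be careful about is the interchange of sum and inner product, but this is immediate from the continuity of $\langle \cdot, \overline{e_m}\rangle_\mu$ on $L^2(\mu)$ together with the $L^2(\mu)$-convergence already established; there is no genuine obstacle beyond this bookkeeping.
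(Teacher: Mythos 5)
Your proposal is correct and follows the same route as the paper: the paper's proof simply invokes absolute summability for $S_e$, Cauchy--Schwarz for $A_e$, and boundedness of the composition, which is exactly what you carry out in detail. The only addition is your explicit verification of the matrix identity via continuity of the inner product, which the paper leaves implicit ("as argued above"), and it is done correctly.
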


\begin{proof}
The mapping $S_{e}$ is well-defined and bounded by absolute summability, while the mapping $A_{e}$ is well-defined and bounded by the Cauchy-Schwarz inequality.  It follows that the composition is bounded, and the matrix $M$ represents the composition $ A_{\overline{e}} S_{\overline{e}}$ as argued above.
\end{proof}

\begin{definition}By $D_s$ we shall mean the operator from $\ell^\infty$ to $\ell^1$ given by the diagonal matrix whose diagonal is the vector $\vec{s}$, where $0<s<1$.  Thus, $D_{s} [ (x_{n})_{n} ] = ( s^{n} x_{n} ) _{n}$. \end{definition}

\begin{definition}We shall define $$V:=\text{span}\set{\vec{v}:v\in\mathbb{D}}.$$ Note that $V$ is a proper, dense subspace of $\ell^2$.  \end{definition}

\begin{definition}Let $X$ and $Y$ be inner product spaces. Let $S$ be a (possibly unclosed) subspace of $\ell^1$. Let $T_1:X\to\ell^\infty$ and $T_2:S\to Y$ be (possibly unbounded) linear operators. Suppose that $D_sT_1X\subseteq S$ for all $0<s<1$. If there exists a bounded linear operator $A:X\to Y$ such that
\begin{equation} \label{Eq:limit}
\lim_{s\to1^-}\ip{T_2 D_sT_1x,y}_{Y}=\ip{Ax,y}_{Y}
\end{equation}
for all $x\in X$ and $y\in Y$, then we say that $A$ is the Abel product of $T_2$ and $T_1$, and we denote this product by $T_2\circledast T_1$.  If $X$ and/or $Y$ are subspaces of larger spaces, then the existence of the limit may depend on the $X$ and $Y$, and in fact may not exist for other subspaces.  We indicate this dependence as $T_2\circledast T_1 = A \restriction_{X,Y}$.
\end{definition}

\begin{remark} A few remarks regarding the Abel product:

\begin{enumerate}
\item The name ``Abel product`` is inspired by Abel summation:  if $T_{2}$ and $T_{1}$ are matrices whose Abel product exists on the finite span of the standard basis vectors $\{ \delta_{n} \}$ in $\ell^{2}$, then 
\[ \lim_{s \to 1^{-} } \ip{T_2 D_sT_1 \delta_{n} , \delta_{m} }_{\ell^2} = \lim_{s \to 1^{-}} \sum_{k = 0}^{\infty} (T_{2})_{m k} s^{k} (T_{1})_{k n} \]
which is the Abel sum of the ordinary matrix product of $T_{2}$ and $T_{1}$.
\item The Abel product extends the ordinary operator (matrix) product as follows: if $T_{2}$ and $T_{1}$ are bounded operators, then $T_{2} \circledast T_{1} = T_{2} \circ T_{1}$.
\item If $X$ and $Y$ are complete spaces, the existence of the limit in (\ref{Eq:limit}) for all $x\in X$ and $y\in Y$ is by the Uniform Boundedness Principle enough to imply the existence of $A$.
\item The same technique in \cite{AMP92a,AMP94a} for dealing with infinite matrices which are unbounded operators on $\ell^{2}(\mathbb{N}_{0})$: in those papers the authors ``pre-condition" an unbounded operator with the same diagonal matrix as here, but the authors in \cite{AMP92a,AMP94a} use the diagonal matrix to effectively perform a variable substitution, and do not consider the limit as we do here.
\end{enumerate}
\end{remark}



\section{A Matrix Characterization via the Abel Product}

Our main result will establish a  characterization of when $\mu$ is a representing measure for $K_{C}$.  Theorem \ref{Th:main} says that $\mu \in \mathcal{M}(K_{C})$ if and only if $ C = (C \circledast A_{\overline{e}}) (S_{\overline{e}} \circledast C)$.  If in the special case that $M$ is a bounded operator from $\ell^2$ to $\ell^2$, i.e. both $A_{\overline{e}}$ and $S_{\overline{e}}$ are bounded (which occurs when $\{ e_{n} \} \subset L^2(\mu)$ is a Bessel sequence,
then we have the following consequence of Theorem \ref{Th:main}:
\begin{align*}
C &= (C \circledast A_{\overline{e}}) (S_{\overline{e}} \circledast C) \\
&= (C \circ A_{\overline{e}}) (S_{\overline{e}} \circ C) \\
&= C (A_{\overline{e}} \circ S_{\overline{e}}) C \\
&= CMC.
\end{align*}
Therefore, the Abel product will allow us to rigorously extend this heuristic to the case when $M$ is not a bounded operator.

\begin{theorem}\label{firsttheorem}Suppose $C:\ell^2\to\ell^2$ is a bounded positive operator representable by an infinite matrix, and form the positive matrix $K_C(w,z):=\ip{C\vec{z},\vec{w}}.$ Suppose $K_C(w,z)$ has weak $L^2(\mu)$-boundaries in the sense that for each fixed $w\in\mathbb{D}$, there exists a function $K_w^\star\in L^2(\mu)$ such that for every $h(x)\in L^2(\mu)$,
$$\lim_{s\to1^{-}}\ip{K_C(w,se^{2\pi ix}),h(x)}_{\mu}=\ip{K_w^\star(x),h(x)}_{\mu}.$$
Suppose further that $K_C(w,z)$ reproduces itself with respect to these weak boundaries, i.e.
$$K_C(w,z)=\ip{K_w^\star,K_z^\star}_\mu.$$
Then there exists a bounded linear operator $L:\ell^2\to L^2(\mu)$ such that 
\begin{equation}\label{L-1}K_w^\star=L\vec{\overline{w}}\end{equation} for all $w\in\mathbb{D}$, and
\begin{equation}\label{L-2}\lim_{s\to1^{-}}\ip{S_eD_sC^Tv,h(x)}_\mu=\ip{Lv,h(x)}_\mu\end{equation}
for all $v\in V$ and $h\in L^2(\mu)$. Consequently, $L\restriction_{V,L^2(\mu)}=S_e\circledast C^T.$\end{theorem}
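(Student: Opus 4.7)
The plan is to define $L$ explicitly on the dense subspace $V$ by the only formula compatible with \eqref{L-1}, verify an isometric-type identity that simultaneously yields well-definedness and boundedness, extend $L$ by density to all of $\ell^{2}$, and finally read off the Abel-product formula \eqref{L-2} directly from the weak-boundary hypothesis after rewriting $K_{C}(w, se^{2\pi ix})$ in the form $S_e D_s C^T \vec{\overline{w}}$. For the definition, set $L\bigl(\sum_{i=1}^N a_i \vec{\overline{w_i}}\bigr) := \sum_{i=1}^N a_i K_{w_i}^\star$ on finite linear combinations; it remains to show this is independent of the representation and bounded.

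Both goals follow from a single calculation. Expanding by bilinearity, applying the reproducing hypothesis, and then expanding $K_C(w_i,w_j) = \sum_{m,n} c_{mn} \overline{w_i}^m w_j^n$ and collecting the coordinates $v_m = \sum_i a_i \overline{w_i}^{\,m}$ inside the double sum yields
\[
\|Lv\|_\mu^2 \;=\; \sum_{i,j} a_i \overline{a_j}\, K_C(w_i, w_j) \;=\; \sum_{m,n} c_{mn} v_m \overline{v_n} \;=\; \langle C^T v, v\rangle_{\ell^2}.
\]
Taking $v=0$ forces $Lv = 0$, giving well-definedness, and since $C^T$ is bounded and positive whenever $C$ is, the identity yields $\|Lv\|_\mu \leq \sqrt{\|C\|}\,\|v\|_{\ell^2}$. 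Hence $L$ extends uniquely to a bounded operator $L:\ell^2 \to L^2(\mu)$ satisfying $L\vec{\overline{w}} = K_w^\star$ for every $w \in \mathbb{D}$, which is \eqref{L-1}.

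For \eqref{L-2}, the crucial observation is that, for $w \in \mathbb{D}$ and $0<s<1$, direct expansion of the defining power series produces
\[
K_C(w, se^{2\pi i x}) \;=\; \sum_{m,n} c_{mn} \overline{w}^m s^n e^{2\pi i n x} \;=\; \bigl(S_e D_s C^T \vec{\overline{w}}\bigr)(x),
\]
since $(C^T \vec{\overline{w}})_n = \sum_m c_{mn} \overline{w}^m$. Pairing against an arbitrary $h \in L^2(\mu)$ and invoking the weak-boundary hypothesis gives $\lim_{s\to 1^-} \langle S_e D_s C^T \vec{\overline{w}}, h\rangle_\mu = \langle L\vec{\overline{w}}, h\rangle_\mu$, which extends linearly to every $v \in V$. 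Since $D_s(C^T v) \in \ell^1$ for $v \in \ell^2$ and $0<s<1$ by Cauchy--Schwarz, while $S_e : \ell^1 \to L^2(\mu)$ is bounded by Lemma \ref{L:Mbounded}, the hypotheses in the definition of the Abel product are satisfied, and we conclude $L\restriction_{V, L^2(\mu)} = S_e \circledast C^T$.

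The main obstacle is purely combinatorial: keeping conjugations and transposes consistent with the paper's conventions $K_C(w,z) = \langle C\vec{z}, \vec{w}\rangle_{\ell^2} = \sum_{m,n} c_{mn} \overline{w}^m z^n$. The conjugate-linearity of $\langle\cdot,\cdot\rangle_{\ell^2}$ in its second slot is precisely what forces $C^T$ rather than $C$ to appear both in the norm identity of the second step and in the Abel factorization of the third; once this bookkeeping is in place, the remaining verifications are routine.
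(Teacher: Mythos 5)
Your proposal is correct and follows essentially the same route as the paper's proof: define $L$ on $V$ by $L\vec{\overline{w}}=K_w^\star$, obtain boundedness from the reproducing hypothesis via the identity $\norm{Lv}_\mu^2=\ip{C^Tv,v}_{\ell^2}=\ip{C\overline{v},\overline{v}}_{\ell^2}\leq\norm{C}\norm{v}_{\ell^2}^2$, and read off \eqref{L-2} by expanding $K_C(w,se^{2\pi ix})=(S_eD_sC^T\vec{\overline{w}})(x)$ and invoking the weak-boundary hypothesis. The only (minor) divergence is well-definedness of $L$: the paper shows distinct vectors $\vec{w_j}$ are linearly independent via a Vandermonde determinant, whereas you deduce $Lv=0$ when $v=0$ directly from the norm identity; both arguments are valid.
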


\begin{proof}
Observe that
\begin{align}
\ip{K^\star_w(x),h(x)}_\mu&=\lim_{s\to1^{-}}\ip{K(w,se^{2\pi ix}),h(x)}_{\mu} \notag \\
&=\lim_{s\to1^{-}}\ip{\sum_{n=0}^{\infty}\sum_{m=0}^{\infty}c_{mn}\overline{w}^ms^ne^{2\pi inx},h(x)}_{\mu} \notag \\
&=\lim_{s\to1^{-}}\ip{\sum_{n=0}^{\infty}\left(C^T\vec{\overline{w}}\right)_{n}s^ne^{2\pi inx},h(x)}_{\mu} \notag \\
&=\lim_{s\to1^-}\ip{\sum_{n=0}^{\infty}\left(D_sC^T\vec{\overline{w}}\right)_{n}e^{2\pi inx},h(x)}_{\mu} \notag \\
&=\lim_{s\to1^-}\ip{S_eD_sC^T\vec{\overline{w}},h(x)}_{\mu}.  \label{Eq:lim}
\end{align}
Define 
\begin{equation} \label{Eq:L}
L:V\to L^2(\mu) : (\sum_{j=0}^{N-1}\alpha_j\vec{w_j})=\sum_{j=0}^{N-1}\alpha_jK_{\overline{w_j}}^\star .
\end{equation}
We claim that $L$ is well-defined. Indeed, suppose that $\sum_{j=0}^{N-1}\alpha_j\vec{w_j}=\vec{0}$ for some distinct $\vec{w_j}$'s. Then in particular, the first $N$ entries of $\sum_{j=0}^{N-1}\alpha_j\vec{w_j}$ are equal to $0$. However, this is impossible, because the $N\times N$ Vandermonde matrix 
$$V=\left((\vec{w_m})_n\right)_{mn}$$
is nonsingular, since the $\vec{w_j}$ are distinct. Thus, it has linearly independent rows. By construction, then, $L$ is linear, and $\eqref{L-1}$ and therefore $\eqref{L-2}$ hold. 

We next claim that $L$ is bounded on $V$, and hence can be extended to all of $\ell^2$.  Because $K_C(w,z)$ reproduces itself with respect to its boundaries,
\begin{align*}\norm{L\left(\sum_{j=0}^{N}\alpha_j\vec{w_j}\right)}^2_\mu&=\norm{\sum_{j=0}^{N}\alpha_jK_{\overline{w_j}}^\star}_{\mu}^2\\&=\sum_{j=0}^{N}\sum_{k=0}^{N}\alpha_j\overline{\alpha_k}K(\overline{w_j},\overline{w_k})\\
&=\sum_{j=0}^{N}\sum_{k=0}^{N}\alpha_j\overline{\alpha_k}\ip{C\vec{\overline{w_k}},\vec{\overline{w_j}}}_{\ell^2}\\
&=\ip{C\left(\sum_{k=0}^{N}\overline{\alpha_k}\vec{\overline{w_k}}\right),\sum_{j=0}^{N}\overline{\alpha_j}\overline{w_j}}_{\ell^2}\\
&\leq\norm{C}\norm{\sum_{j=0}^{N}\overline{\alpha_j}\vec{\overline{w_j}}}^2_{\ell^2}\\
&=\norm{C}\norm{\sum_{j=0}^{N}\alpha_j\vec{w_j}}^2_{\ell^2}.\end{align*}
It now follows from Equations (\ref{Eq:lim}) and (\ref{Eq:L}) that $L\restriction_{V,L^2(\mu)}=S_e\circledast C^T$.
\end{proof}

\begin{lemma} \label{L:conjugate}
For $v \in \ell^2$, $s \in (0,1)$, we have
\begin{equation} \label{Eq:conjugate}
S_{\overline{e}} D_{s} C v = \overline{ S_{e} D_{s} C^{T} \overline{v} }.
\end{equation}
\end{lemma}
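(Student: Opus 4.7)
My plan is to prove the identity by expanding both sides via the definitions of $S_e$, $S_{\overline{e}}$, and $D_s$, and then to invoke the self-adjointness of $C$ (which follows from its positivity) to match the terms after taking a complex conjugate.

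First, I would observe that for any $v \in \ell^2$, the sequence $D_s C v = (s^n (Cv)_n)_n$ lies in $\ell^1$, since by Cauchy–Schwarz
\[
\sum_{n=0}^\infty s^n |(Cv)_n| \le \Bigl(\sum_{n=0}^\infty s^{2n}\Bigr)^{1/2} \|Cv\|_{\ell^2} \le \frac{\|C\|\,\|v\|_{\ell^2}}{\sqrt{1-s^2}} < \infty.
\]
Hence by Lemma \ref{L:Mbounded} the expression $S_{\overline{e}} D_s C v$ is a well-defined element of $L^2(\mu)$, and by the same reasoning so is $S_e D_s C^T \overline{v}$ (applied to $\overline{v} \in \ell^2$). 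Writing out the definitions gives, pointwise in $x$,
\[
(S_{\overline{e}} D_s C v)(x) = \sum_{n=0}^\infty s^n (Cv)_n \, e^{-2\pi i n x}, \qquad (S_e D_s C^T \overline{v})(x) = \sum_{n=0}^\infty s^n (C^T \overline{v})_n \, e^{2\pi i n x}.
\]

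Next I would take the complex conjugate of the right-hand expression, obtaining $\sum_n s^n \overline{(C^T \overline{v})_n} \, e^{-2\pi i n x}$, so the desired identity reduces to the coefficient equality $(Cv)_n = \overline{(C^T \overline{v})_n}$ for every $n$. Here is where positivity enters: since $C$ is a positive operator representable as an infinite matrix, it is self-adjoint, so its matrix entries satisfy $c_{nm} = \overline{c_{mn}}$. Then
\[
\overline{(C^T \overline{v})_n} = \overline{\sum_{m=0}^\infty c_{mn} \overline{v_m}} = \sum_{m=0}^\infty \overline{c_{mn}}\, v_m = \sum_{m=0}^\infty c_{nm}\, v_m = (Cv)_n,
\]
where interchanging the conjugate with the sum is justified because the sum converges absolutely: $v \in \ell^2$ and $(c_{mn})_m$ is the $n$-th column of $C$, hence also in $\ell^2$ (being the image of a standard basis vector under the bounded operator $C$).

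I do not anticipate a serious obstacle here — the content is essentially the identity $\overline{C} = C^T$ for a self-adjoint matrix, dressed up with the diagonal damping $D_s$ and the two synthesis operators $S_e, S_{\overline{e}}$, which are complex conjugates of one another in an obvious sense. The only mild bookkeeping concern is the justification of termwise conjugation of the infinite series, which follows from absolute convergence as noted above.
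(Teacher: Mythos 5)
Your proof is correct and follows essentially the same route as the paper's: expand both sides through $S_{\overline{e}}$, $D_s$, and the matrix entries, then match coefficients after conjugation. The paper's own proof simply displays the two expansions and leaves the identification $c_{mn}=\overline{c_{nm}}$ (self-adjointness of the positive operator $C$) implicit, whereas you make that step, the $\ell^1$ membership of $D_sCv$, and the termwise conjugation explicit; these are welcome details, not a different argument.
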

\begin{proof}
We calculate
\begin{align*}
S_{\overline{e}} D_{s} C v &= S_{\overline{e}} D_{s} \left( \sum_{n=0}^{\infty} c_{mn} v_{n} \right)_{m} \\
&= \sum_{m = 0}^{\infty} s^{m} \left( \sum_{n=0}^{\infty} c_{mn} v_{n} \right) \overline{e_{m}}
\end{align*}
whereas
\begin{align*}
S_{e} D_{s} C^{T} \overline{v} &= S_{e} D_{s} \left( \sum_{n=0}^{\infty} c_{nm} \overline{v_{n}} \right)_{m} \\
&= \sum_{m = 0}^{\infty} s^{m} \left( \sum_{n=0}^{\infty} c_{nm} \overline{v_{n}} \right) e_{m}.
\end{align*}
\end{proof}

\begin{corollary}\label{ConjAbelSum}  Under the hypotheses of Theorem \ref{firsttheorem}, there exists a bounded linear operator $\tilde{L}:\ell^2\to L^2(\mu)$ such that
$$\lim_{s\to1^-}\ip{S_{\overline{e}}D_sCv,h}_{\mu}=\ip{\tilde{L}v,h}_{\mu}$$ for all $v\in V$ and $h\in L^2(\mu)$. Consequently, $S_{\overline{e}}\circledast C=\tilde{L}\restriction_{V,L^2(\mu)}$.
\end{corollary}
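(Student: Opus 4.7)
My plan is to reduce this corollary directly to Theorem \ref{firsttheorem} using Lemma \ref{L:conjugate} together with the behavior of the $L^2(\mu)$ inner product under complex conjugation. The essential observation is that the subspace $V$ and the space $L^2(\mu)$ are both stable under pointwise complex conjugation: if $v=\sum_j\alpha_j\vec{w_j}\in V$ then $\overline{v}=\sum_j\overline{\alpha_j}\vec{\overline{w_j}}\in V$ since $\overline{w_j}\in\mathbb{D}$, and of course $\overline{h}\in L^2(\mu)$ whenever $h\in L^2(\mu)$.

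First, I will rewrite the quantity of interest. By Lemma \ref{L:conjugate} we have $S_{\overline{e}}D_sCv=\overline{S_eD_sC^T\overline{v}}$, so using the identity $\ip{\overline{f},h}_\mu=\overline{\ip{f,\overline{h}}_\mu}$ (which is immediate from $\ip{f,g}_\mu=\int f\overline{g}\,d\mu$), we obtain
\begin{equation*}
\ip{S_{\overline{e}}D_sCv,h}_\mu=\overline{\ip{S_eD_sC^T\overline{v},\overline{h}}_\mu}.
\end{equation*}
Since $\overline{v}\in V$ and $\overline{h}\in L^2(\mu)$, Theorem \ref{firsttheorem} applies to the expression inside the conjugate, yielding
\begin{equation*}
\lim_{s\to1^-}\ip{S_eD_sC^T\overline{v},\overline{h}}_\mu=\ip{L\overline{v},\overline{h}}_\mu.
\end{equation*}
Taking the complex conjugate of both sides and using the same identity in reverse gives
\begin{equation*}
\lim_{s\to1^-}\ip{S_{\overline{e}}D_sCv,h}_\mu=\overline{\ip{L\overline{v},\overline{h}}_\mu}=\ip{\overline{L\overline{v}},h}_\mu.
\end{equation*}

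This motivates defining $\tilde{L}:\ell^2\to L^2(\mu)$ by the formula $\tilde{L}v:=\overline{L\overline{v}}$, where $L$ is the bounded operator supplied by Theorem \ref{firsttheorem}. I will then verify that $\tilde{L}$ is (complex-)linear (the two conjugations cancel on scalars: $\tilde{L}(\alpha v)=\overline{L(\overline{\alpha}\,\overline{v})}=\overline{\overline{\alpha}L\overline{v}}=\alpha\tilde{L}v$) and bounded with $\|\tilde{L}\|=\|L\|$ (since conjugation is an isometry on both $\ell^2$ and $L^2(\mu)$). Combining this definition with the limit identity above gives
\begin{equation*}
\lim_{s\to1^-}\ip{S_{\overline{e}}D_sCv,h}_\mu=\ip{\tilde{L}v,h}_\mu
\end{equation*}
for all $v\in V$ and $h\in L^2(\mu)$, which is precisely the conclusion that $S_{\overline{e}}\circledast C=\tilde{L}\restriction_{V,L^2(\mu)}$.

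I do not anticipate any serious obstacle: all the analytic content is already contained in Theorem \ref{firsttheorem}, and the role of this corollary is simply to transport the conclusion across the conjugation symmetry encoded in Lemma \ref{L:conjugate}. The only point requiring care is keeping track of which arguments are conjugated and verifying that $V$ is invariant under the entry-wise conjugation $v\mapsto\overline{v}$, which follows from the fact that the spanning set $\{\vec{v}:v\in\mathbb{D}\}$ is itself conjugation-invariant.
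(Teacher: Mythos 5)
Your proposal is correct and follows essentially the same route as the paper: both define $\tilde{L}v:=\overline{L\overline{v}}$, invoke Lemma \ref{L:conjugate} to rewrite $S_{\overline{e}}D_sCv$ as $\overline{S_eD_sC^T\overline{v}}$, and then transfer the limit from Theorem \ref{firsttheorem} across complex conjugation of the inner product. Your explicit check that $V$ is stable under entrywise conjugation (so that Theorem \ref{firsttheorem} may be applied to $\overline{v}$) is a point the paper leaves implicit, but otherwise the arguments coincide.
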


\begin{proof}Define $\tilde{L}:\ell^2\to L^2(\mu)$ by $$\tilde{L}v:=\overline{L\overline{v}}.$$ It is easy to see that $\tilde{L}$ is linear and has the same bound as $L$. Let $h\in L^2(\mu)$. We have:
\begin{align*}\lim_{s\to 1^-}\ip{S_{\overline{e}}D_s Cv,h}_{\mu}&=\lim_{s\to1^-}\int_{0}^{1} [S_{\overline{e}}D_s Cv](x)\overline{h(x)}\,d\mu(x)\\
&=\lim_{s\to1^-}\int_{0}^{1}\overline{ [S_eD_sC^T\overline{v}](x)}\overline{h(x)}\,d\mu(x)\\
&=\int_{0}^{1} \left([\overline{L\overline{v}}](x) \right) \overline{h(x)}\,d\mu(x)\\
&=\ip{\tilde{L}v,h}_{\mu}.
\end{align*}

\end{proof}

\begin{lemma}\label{swappinglemma} Under the hypotheses of Theorem \ref{firsttheorem}, for all $w,z\in\mathbb{D}$, $$\ip{S_e\circledast C^T\vec{\overline{w}},S_e\circledast C^T\vec{\overline{z}}}=\ip{S_{\overline{e}}\circledast C\vec{z},S_{\overline{e}}\circledast C\vec{w}}.$$
\end{lemma}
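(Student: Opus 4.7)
The plan is to show that both sides of the claimed identity equal $K_C(w,z)$, using Theorem \ref{firsttheorem} on the left and Corollary \ref{ConjAbelSum} on the right, together with the reproducing property of $K_C$ against its weak boundaries.

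For the left-hand side, Theorem \ref{firsttheorem} identifies $S_e \circledast C^T$ with the operator $L$ on $V$, and the defining relation $L\vec{\overline{w}} = K_w^\star$ gives
\[
\ip{S_e \circledast C^T \vec{\overline{w}}, \, S_e \circledast C^T \vec{\overline{z}}}_\mu = \ip{K_w^\star, K_z^\star}_\mu.
\]
By the reproducing hypothesis, $\ip{K_w^\star, K_z^\star}_\mu = K_C(w, z)$.

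For the right-hand side, Corollary \ref{ConjAbelSum} identifies $S_{\overline{e}} \circledast C$ on $V$ with $\tilde{L}v = \overline{L\overline{v}}$. Since $\overline{\vec{z}} = \vec{\overline{z}}$ (because $\overline{z^n}=\overline{z}^n$), we obtain $\tilde{L}\vec{z} = \overline{L \vec{\overline{z}}} = \overline{K_z^\star}$, and likewise for $w$. Hence
\[
\ip{S_{\overline{e}} \circledast C \vec{z}, \, S_{\overline{e}} \circledast C \vec{w}}_\mu = \ip{\overline{K_z^\star}, \overline{K_w^\star}}_\mu = \int_0^1 \overline{K_z^\star(x)}\, K_w^\star(x)\, d\mu(x) = \ip{K_w^\star, K_z^\star}_\mu,
\]
which again equals $K_C(w, z)$ by the reproducing property.

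Comparing the two computations yields the claimed equality. There is no real obstacle here; the only point requiring a moment of care is correctly pushing the complex conjugation through the vector $\vec{z}$ and through the $L^2(\mu)$ inner product, after which both sides collapse to $K_C(w, z)$.
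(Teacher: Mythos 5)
Your proof is correct, but it routes the argument differently from the paper. The paper's proof never mentions the boundary functions $K_w^\star$: it writes each side as an iterated limit of integrals of the approximants $S_eD_rC^T\vec{\overline{w}}$, applies the pointwise conjugation identity $S_{\overline{e}}D_sCv=\overline{S_eD_sC^T\overline{v}}$ of Lemma \ref{L:conjugate} under the integral sign, and passes to the limit. You instead work entirely at the level of the limit operators: you identify $S_e\circledast C^T$ with $L$ (Theorem \ref{firsttheorem}) and $S_{\overline{e}}\circledast C$ with $\tilde L$, and use the relation $\tilde Lv=\overline{L\overline{v}}$ from the proof of Corollary \ref{ConjAbelSum} --- which is precisely the limit-level incarnation of Lemma \ref{L:conjugate} --- to reduce both sides to $\ip{K_w^\star,K_z^\star}_\mu$. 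This is arguably cleaner, since it sidesteps the iterated double limit that the paper has to justify via weak convergence; the cost is that you lean on the explicit formula for $\tilde L$, which appears only inside the proof of Corollary \ref{ConjAbelSum} rather than in its statement. One small remark: your final appeal to the reproducing property is unnecessary --- once both sides are shown to equal $\ip{K_w^\star,K_z^\star}_\mu$ you are done, and indeed the lemma does not require the reproducing hypothesis for its conclusion, only the existence of the weak boundaries.
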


\begin{proof} By Theorem \ref{firsttheorem} and Corollary \ref{ConjAbelSum}, both $S_{e} \circledast C^{T}$ and $S_{\overline{e}} \circledast C$ exist on $V$.  We have by Lemma \ref{L:conjugate}:
\begin{align*}\ip{S_e\circledast C^T\vec{\overline{w}},S_e\circledast C^T\vec{\overline{z}}}_{\mu}
&=\lim_{s\to 1^-}\lim_{r\to1^-}\int_{0}^{1} [S_e D_rC^T\vec{\overline{w}}](x)\overline{[S_eD_sC^T\vec{\overline{z}}](x)}\,d\mu(x)\\
&=\lim_{s\to 1^-}\lim_{r\to1^-}\int_{0}^{1}\overline{[S_{\overline{e}}D_rC\vec{w}](x)}[S_{\overline{e}}D_sC\vec{z}](x)\,d\mu(x)\\
&=\ip{S_{\overline{e}}\circledast C\vec{z},S_{\overline{e}}\circledast C\vec{w}}_{\mu}.
\end{align*}
\end{proof}

\begin{lemma}\label{adjointlemma}Suppose there exists a bounded operator $L:\ell^2\to L^2(\mu)$ such that $L\restriction_{V,L^2(\mu)}=S_e\circledast C^T$. Then for all $h\in L^2(\mu)$ and $v\in V$, $$\lim_{r\to1^-}\ip{C^TD_r A_eh,v}_{\ell^2}=\ip{L^\ast h,v}_{\ell^2}.$$ Consequently, $C^T\circledast A_e=L^\ast \restriction_{L^2(\mu),V}$.\end{lemma}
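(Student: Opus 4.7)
The plan is to manipulate the inner product $\ip{C^T D_r A_e h, v}_{\ell^2}$ into the form $\overline{\ip{S_e D_r C^T v, h}_\mu}$, at which point the hypothesis $L \restriction_{V, L^2(\mu)} = S_e \circledast C^T$ furnishes the limit as $r \to 1^-$, and the definition of the adjoint converts $\overline{\ip{Lv, h}_\mu}$ into $\ip{L^* h, v}_{\ell^2}$. The conclusion about the Abel product is then immediate from its definition.

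Before running the calculation I need to record the following preparatory facts. First, positivity of $C$ gives $C^* = C$, equivalently $c_{nm} = \overline{c_{mn}}$, so $C^T = \overline{C}$ is a bounded self-adjoint operator on $\ell^2$. Second, for $r \in (0,1)$, $D_r$ is bounded and self-adjoint on $\ell^2$, and additionally it turns $\ell^\infty$ sequences into $\ell^1$ sequences via geometric decay. Third, Cauchy--Schwarz places $A_e h \in \ell^\infty$, so $D_r A_e h \in \ell^1 \subset \ell^2$ and consequently $C^T D_r A_e h \in \ell^2$. Fourth, for $v \in V$ the geometric decay of the entries of each $\vec{w}$ (with $|w| < 1$) places $v \in \ell^1 \cap \ell^2$, so $C^T v \in \ell^2$ and $D_r C^T v \in \ell^1$. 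Fifth, for $c \in \ell^1$ and $h \in L^2(\mu)$ the duality $\ip{S_e c, h}_\mu = \ip{c, A_e h}_{\ell^2}$ holds, obtained by interchanging the absolutely convergent sum defining $S_e c$ with the integral against $\overline{h}$.

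The core calculation is a three-step chain. First, self-adjointness of $C^T$ on $\ell^2$ rewrites $\ip{C^T D_r A_e h, v}_{\ell^2}$ as $\ip{D_r A_e h, C^T v}_{\ell^2}$. Second, self-adjointness of $D_r$ moves that factor across, producing $\ip{A_e h, D_r C^T v}$, which is legitimate as an $\ell^\infty$--$\ell^1$ pairing by facts (iii) and (iv); this in turn equals $\overline{\ip{D_r C^T v, A_e h}}$. Third, the duality in (v) applied to $c = D_r C^T v$ identifies this with $\overline{\ip{S_e D_r C^T v, h}_\mu}$. Letting $r \to 1^-$ and invoking the hypothesis gives $\overline{\ip{Lv, h}_\mu} = \ip{h, Lv}_\mu = \ip{L^* h, v}_{\ell^2}$, establishing the limit identity. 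Comparing with the definition of $\circledast$ yields $C^T \circledast A_e = L^* \restriction_{L^2(\mu), V}$.

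The main obstacle is purely bookkeeping: none of the operators $S_e$, $A_e$, $D_r$ is uniformly bounded on $\ell^2$, and the various intermediate vectors shuttle among $\ell^1$, $\ell^2$, $\ell^\infty$, and $L^2(\mu)$, so each commutation must be verified on the appropriate space. The essential structural point that makes the chain close is that the factor $D_r$, for $r < 1$, is precisely what upgrades $\ell^\infty$ vectors to $\ell^1$ vectors at both ends, allowing the $\ell^\infty$--$\ell^1$ pairing to mediate between the $\ell^2$ manipulations on one side and the $L^2(\mu)$ pairing on the other.
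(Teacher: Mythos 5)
Your proposal is correct and follows essentially the same route as the paper: both reduce $\ip{C^TD_rA_eh,v}_{\ell^2}$ to $\overline{\ip{S_eD_rC^Tv,h}_\mu}$ and then invoke the hypothesis together with the definition of the adjoint. The paper does this by writing out the double sum and interchanging the order of summation (using $c_{nm}=\overline{c_{mn}}$ implicitly), whereas you package the same interchange as self-adjointness of $C^T$ and $D_r$ plus the $\ell^1$--$L^2(\mu)$ duality between $S_e$ and $A_e$; the underlying justification is identical.
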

\begin{proof}
We calculate:
\begin{align*}\lim_{r\to1^-}\ip{C^TD_rA_eh,v}_{\ell^2}
&=\lim_{r\to1^-}\ip{ \left(\sum_{n=0}^{\infty}c_{nm}r^n\ip{h,e_n}_\mu\right)_m,v}_{\ell^2}\\
&=\lim_{r\to1^-}\sum_{m=0}^{\infty}\sum_{n=0}^{\infty}\overline{v_m}c_{nm}r^n\ip{h,e_n}_\mu\\
&=\lim_{r\to1^-}\sum_{n=0}^{\infty}\sum_{m=0}^{\infty}\overline{v_m}c_{nm}r^n\ip{h,e_n}_\mu &\text{  [by absolute summability]}\\
&=\lim_{r\to1^-}\ip{h,\sum_{n=0}^{\infty}\sum_{m=0}^{\infty}v_mc_{mn}r^ne_n}_\mu\\
&=\lim_{r\to1^-}\ip{h,S_eD_rC^Tv}_\mu\\
&=\ip{h,Lv}_{\mu}\\
&=\ip{L^\ast h,v}_{\ell^2}.
\end{align*}
\end{proof}

We are now in a position to prove our main result, which is a characterization of when a Borel measure $\mu$ is a representing measure for a positive matrix $K_{C}$ on $\mathbb{D}$.  Here we formalize the heuristic $C = CMC$ using the Abel product.

\begin{theorem} \label{Th:main}
Suppose $C$ is a positive bounded operator on $\ell^2$ and $\mu$ is a Borel measure on $[0,1)$.  Then $\mu\in\mathcal{M}(K_C)$ if and only if $C\restriction_{V,\ell^2}=(C\circledast A_{\overline{e}})(S_{\overline{e}}\circledast C)$.\end{theorem}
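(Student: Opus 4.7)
The plan is to show that the theorem reduces to the operator identity $C = \tilde{L}^{*}\tilde{L}$ on $V$, where $\tilde{L} := S_{\overline{e}} \circledast C$, and that this identity is equivalent to $K_{C}$ reproducing itself with the boundary function $K_{w}^{\star} = \overline{\tilde{L}\vec{w}}$. The work therefore has two pieces: (i) showing that $\tilde{L}^{*}$ is exactly $C \circledast A_{\overline{e}}$, and (ii) matching $\tilde{L}\vec{w}$ to the weak $L^{2}(\mu)$-boundary of $K_{C}(w,\cdot)$.

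For the forward direction, assume $\mu \in \mathcal{M}(K_{C})$. Then Theorem \ref{firsttheorem} furnishes $L$ with $L\restriction_{V,L^{2}(\mu)} = S_{e} \circledast C^{T}$ and $L\vec{\overline{w}} = K_{w}^{\star}$, while Corollary \ref{ConjAbelSum} furnishes $\tilde{L}$ with $\tilde{L}\restriction_{V,L^{2}(\mu)} = S_{\overline{e}} \circledast C$ and $\tilde{L}\vec{w} = \overline{K_{w}^{\star}}$. To identify $C \circledast A_{\overline{e}}$ with $\tilde{L}^{*}$, I would mimic the calculation of Lemma \ref{adjointlemma} with the pair $(S_{\overline{e}}, C)$ replacing $(S_{e}, C^{T})$, establishing
\[
\ip{C D_{r} A_{\overline{e}} h, v}_{\ell^{2}} = \ip{h, S_{\overline{e}} D_{r} C v}_{\mu}
\]
for all $v \in V$, $h \in L^{2}(\mu)$, $r \in (0,1)$. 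This step relies on absolute summability of the resulting double series (which holds because the rows of $C$ lie in $\ell^{2}$, $r^{n}$ is a summable weight, and entries of $v \in V$ decay geometrically) together with the Hermiticity of $C$ used to rewrite $\sum_{m} c_{mn}\overline{v_{m}}$ as $\overline{(Cv)_{n}}$. Letting $r \to 1^{-}$ produces $C \circledast A_{\overline{e}} = \tilde{L}^{*}$, and then for $\vec{z} \in V$ and $w \in \mathbb{D}$ I would compute
\[
\ip{\tilde{L}^{*}\tilde{L}\vec{z}, \vec{w}}_{\ell^{2}} = \ip{\tilde{L}\vec{z}, \tilde{L}\vec{w}}_{\mu} = \ip{\overline{K_{z}^{\star}}, \overline{K_{w}^{\star}}}_{\mu} = \overline{\ip{K_{z}^{\star}, K_{w}^{\star}}_{\mu}} = K_{C}(w,z) = \ip{C\vec{z}, \vec{w}}_{\ell^{2}},
\]
extending in $\vec{w}$ by density of $V$ in $\ell^{2}$.

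For the reverse direction, the hypothesis already asserts that both Abel products exist as bounded operators, so set $\tilde{L} := S_{\overline{e}} \circledast C$ and repeat the Hermitian calculation above to obtain $C \circledast A_{\overline{e}} = \tilde{L}^{*}$. Define $K_{w}^{\star} := \overline{\tilde{L}\vec{w}}$ for each $w \in \mathbb{D}$. Lemma \ref{L:conjugate} lets me rewrite the truncation $K_{C}(w, se^{2\pi ix}) = (S_{e}D_{s}C^{T}\vec{\overline{w}})(x)$ as $\overline{(S_{\overline{e}} D_{s} C \vec{w})(x)}$, and taking $s \to 1^{-}$ then yields
\[
\lim_{s \to 1^{-}} \ip{K_{C}(w, se^{2\pi ix}), h(x)}_{\mu} = \overline{\ip{\tilde{L}\vec{w}, \overline{h}}_{\mu}} = \ip{K_{w}^{\star}, h}_{\mu},
\]
so $K_{w}^{\star}$ is the weak $L^{2}(\mu)$-boundary of $K_{C}(w,\cdot)$. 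Reproduction then follows from
\[
\ip{K_{w}^{\star}, K_{z}^{\star}}_{\mu} = \ip{\tilde{L}\vec{z}, \tilde{L}\vec{w}}_{\mu} = \ip{\tilde{L}^{*}\tilde{L}\vec{z}, \vec{w}}_{\ell^{2}} = \ip{C\vec{z}, \vec{w}}_{\ell^{2}} = K_{C}(w,z),
\]
giving $\mu \in \mathcal{M}(K_{C})$.

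The principal obstacle is conjugate bookkeeping: because $S_{\overline{e}}$, rather than $S_{e}$, appears in the theorem, $\tilde{L}\vec{w}$ naturally produces $\overline{K_{w}^{\star}}$ in place of $K_{w}^{\star}$, and identifying $C \circledast A_{\overline{e}}$ with $\tilde{L}^{*}$ rather than with a transpose uses the Hermiticity $C = C^{*}$ in an essential way. A secondary technical point is the absolute summability for the double sum in the adjoint calculation, which is uniformly controlled for $r < 1$ by the rapid decay of $v \in V$ and the $\ell^{2}$-boundedness of the rows of $C$.
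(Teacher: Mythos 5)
Your proposal is correct and follows essentially the same route as the paper: the forward direction rests on Theorem \ref{firsttheorem}, the conjugation identities (Corollary \ref{ConjAbelSum} and Lemmas \ref{L:conjugate}, \ref{swappinglemma}), and the adjoint identification of Lemma \ref{adjointlemma}, while the converse exhibits the weak boundary as $(S_e\circledast C^T)\vec{\overline{w}}$ and factors $C=\tilde{L}^{*}\tilde{L}$ on $V$, exactly as in the paper. Your explicit appeal to the Hermiticity of $C$ when converting the $(C^T,A_e)$ form of the adjoint lemma into the $(C,A_{\overline{e}})$ form needed in the theorem is, if anything, slightly more careful than the paper's own bookkeeping at that step.
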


\begin{proof}If $\mu\in\mathcal{M}(K_C)$, then by definition, $K_C(w,z)$ has weak boundaries and reproduces with respect to those boundaries, and so Theorem \ref{firsttheorem} applies. Hence, for any $v=\sum_{j=0}^{M-1}\alpha_j\vec{z_j}\in V$ and $w=\sum_{k=0}^{M-1}\beta_k\vec{w_k}\in V$,
\begin{align*}\ip{C\left(\sum_{j=0}^{M-1}\alpha_j\vec{z_j}\right),\sum_{k=0}^{N-1}\beta_k\vec{w_k}}_{\ell^2}&=\sum_{j=0}^{M-1}\sum_{k=0}^{N-1}\alpha_j\overline{\beta_k}\ip{C\vec{z_j},\vec{w_k}}_{\ell^2}\\
&=\sum_{j=0}^{M-1}\sum_{k=0}^{N-1}\alpha_j\overline{\beta_k}K_C(w_k,z_j)\\
&=\sum_{j=0}^{M-1}\sum_{k=0}^{N-1}\alpha_j\overline{\beta_k}\ip{K_{w_k}^\star,K_{z_j}^\star}_\mu\\
&=\sum_{j=0}^{M-1}\sum_{k=0}^{N-1}\alpha_j\overline{\beta_k}\ip{L\vec{\overline{w_k}},L\vec{\overline{z_j}}}_\mu\\
&=\sum_{j=0}^{M-1}\sum_{k=0}^{N-1}\alpha_j\overline{\beta_k}\ip{S_e\circledast C^T\vec{\overline{w_k}},S_e\circledast C^T\vec{\overline{z_j}}}_\mu\\
&=\sum_{j=0}^{M-1}\sum_{k=0}^{N-1}\alpha_j\overline{\beta_k}\ip{S_{\overline{e}}\circledast C\vec{z_j},S_{\overline{e}}\circledast C\vec{w_k}}_\mu&\text{[by Lemma \ref{swappinglemma}]}\\
&=\sum_{j=0}^{M-1}\sum_{k=0}^{N-1}\alpha_j\overline{\beta_k}\ip{(C\circledast A_{\overline{e}})(S_{\overline{e}}\circledast C)\vec{z_j},\vec{w_k}}_{\ell^2}&\text{[by Lemma \ref{adjointlemma}]}\\
&=\ip{(C\circledast A_{\overline{e}})(S_{\overline{e}}\circledast C)\left(\sum_{j=0}^{M-1}\alpha_j\vec{z_j}\right),\sum_{k=0}^{N-1}\beta_k\vec{w_k}}_{\ell^2}.
\end{align*}
Since $V$ is dense in $\ell^2$, by continuity of the inner product the above holds not just for $w \in V$ but for all $w\in\ell^2$, and hence $C\restriction_{V,\ell^2}=(C\circledast A_{\overline{e}})(S_{\overline{e}}\circledast C)$.

Conversely, suppose $C\restriction_{V,\ell^2}=(C\circledast A_{\overline{e}})(S_{\overline{e}}\circledast C).$ Since $S_{\overline{e}}\circledast C$ is assumed to exist boundedly on $V$, there exists a bounded extension $\tilde{L}:\ell^2\to L^2(\mu)$ of $S_{\overline{e}}\circledast C$. Lemma \ref{adjointlemma} applies to show that $(C^T\circledast A_{\overline{e}})=\tilde{L}^\ast \restriction_{L^2(\mu),V}$, and $S_{e}\circledast C^T$ exists by the proof of Corollary \ref{ConjAbelSum}. Let $h(x)\in L^2(\mu)$. We have
\begin{align*}\lim_{s\to1^-}\ip{K_C(w,se^{2\pi ix}),h(x)}_{\mu}&=\lim_{s\to1^-}\ip{\sum_{n=0}^{\infty}\sum_{m=0}^{\infty}c_{mn}\overline{w}^ms^ne^{2\pi inx},h(x)}_{\mu}\\
&=\lim_{s\to1^-}\ip{S_{e}D_sC^T\vec{\overline{w}},h}_{\mu}\\
&=\ip{(S_e\circledast C^T)\vec{\overline{w}},h}_{\mu},
\end{align*}
which shows that $K_C(w,z)$ possesses weak $L^2(\mu)$ boundaries $K_w^\star=(S_e\circledast C^T)\vec{\overline{w}}$.  Then observe that by Lemma \ref{swappinglemma}
\begin{align*}
\ip{K_w^\star,K_z^\star}_{\mu}&=\ip{(S_e\circledast C^T)\vec{\overline{w}},(S_e\circledast C^T)\vec{\overline{z}}}_{\mu}\\
&=\ip{(S_{\overline{e}}\circledast C)\vec{z},(S_{\overline{e}}\circledast C)\vec{w}}_{\mu} &\text{[by Lemma \ref{swappinglemma}]}\\
&=\ip{\tilde{L}\vec{z},\tilde{L}\vec{w}}_{\mu} &\text{[by Corollary \ref{ConjAbelSum}]}\\
&=\ip{\tilde{L}^\ast\tilde{L}\vec{z},\vec{w}}_{\ell^2}\\
&=\ip{(C \circledast A_{\overline{e}})(S_{\overline{e}}\circledast C)\vec{z},\vec{w}}_{\ell^2}\\
&=\ip{C\vec{z},\vec{w}}_{\ell^2}&\text{[by assumption]}\\
&=K_C(w,z).
\end{align*}
Thus, $\mu\in\mathcal{M}(K_C)$.

\end{proof}

\section{Examples}

Here are a few examples to illustrate the Abel product.  First, consider the matrices
\[ B = \begin{pmatrix} 1 & 1 & 1 & \dots \end{pmatrix}, \qquad A = \begin{pmatrix} 1 & -1 & 1 & -1 &\dots \end{pmatrix}^{T}. \]
We view $A$ as being a bounded operator from $\mathbb{C} = \ell^{2}(\{0\})$ to $\ell^{\infty}(\mathbb{N}_{0})$, and $B$ a bounded operator from $\ell^{1}(\mathbb{N}_{0})$ to $\mathbb{C}$.  Note that the neither the matrix product nor the composition between $B$ and $A$ exist.  However, for $x,y \in \mathbb{C}$, we have
\[ \langle B D_{r} A x, y \rangle = \sum_{n=0}^{\infty} (-r)^{n} x \overline{y} \to \dfrac{1}{2} \langle x, y \rangle. \]
Thus, the Abel product exists, and $(B \circledast A) x = \dfrac{1}{2}x$.


We now turn to an example that illustrates Theorem \ref{Th:main} with a measure $\mu$ and a matrix $C$ for which the ordinary products $S_{\overline{e}}\circ C$ and $C\circ A_{\overline{e}}$ do not exist. Let $E\subset[0,1)$ be a finite set of points, and let $\mu$ be the normalized, equal-weight point-mass measure supported on $E$. By \cite[Theorem VIII.1.14]{Zygmund} (see also \cite{KK66a,PP15a}), there exists a continuous function $\psi$ in the disc algebra $A(\mathbb{D})$ whose Fourier series $\sum_{n=0}^{\infty}a_ne^{2\pi int}$ diverges on $E$. Set 
\[ \varphi(t) = \dfrac{ \psi( e^{2 \pi i t} )}{ \| \psi( e^{2 \pi i t} ) \|_{\mu} }. \]
Note that $\varphi$ is continuous, so that its Fourier series is Abel summable to itself pointwise everywhere, and that $\norm{\varphi}_\mu=1$. 

Now, define $\vec{x}=(\ip{e_n,\varphi}_2)_{n=0}^{\infty}$. That is, $\vec{x}$ is the sequence of conjugated Fourier coefficients of $\varphi$. Let $C=\vec{x}\otimes\vec{x}=(x_m\overline{x_n})_{mn}$. Since $\mu$ is a finite sum of point-masses, $L^2(\mu)$-norm convergence and pointwise convergence are equivalent. Thus, we compute that for any $\vec{y}\in\ell^2(\mathbb{N}_0)$ and $h\in L^2(\mu)$,
\begin{align*}\lim_{r\to1^-}\ip{S_{\overline{e}}D_rC\vec{y},h}_{\mu}&=\lim_{r\to1^-}\ip{S_{\overline{e}}D_r\left(x_m\sum_{n=0}^{\infty}\overline{x_n}y_n\right)_m,h}_{\mu}\\
&=\lim_{r\to1^-}\ip{S_{\overline{e}}\left(r^mx_m\ip{\vec{y},\vec{x}}_{\ell^2}\right)_m,h}_{\mu}\\
&=\lim_{r\to1^-}\ip{\sum_{m=0}^{\infty}r^mx_m\ip{\vec{y},\vec{x}}_{\ell^2}\overline{e_m},h}_{\mu}\\
&=\ip{\lim_{r\to1^-}\sum_{m=0}^{\infty}r^mx_m\ip{\vec{y},\vec{x}}_{\ell^2}\overline{e_m},h}_{\mu}\\
&=\ip{\ip{\vec{y},\vec{x}}_{\ell^2}\overline{\varphi},h}_{\mu}
\end{align*}
This shows that $S_{\overline{e}}\circledast C=\ip{\cdot,\vec{x}}_{\ell^2}\overline{\varphi}$ (formally in the weak operator topology, but since $L^2(\mu)$ is finite dimensional, we obtain convergence in the strong operator topology). We note that, by construction, for any $t \in E$, $\sum_{m=0}^{\infty}x_m\overline{e_m(t)}$ does not converge, and so the above computations without the limit and with $r=1$ show that the ordinary composition $S_{\overline{e}}\circ C$ does not exist.



By Lemma \ref{adjointlemma}, we obtain that $C\circledast A_{\overline{e}}=\ip{\cdot,\overline{\varphi}}_{\mu}\vec{x}$.
We then have that for any $\vec{y},\vec{z}\in \ell^2(\mathbb{N}_0)$,
\begin{align*}\ip{(C\circledast A_{\overline{e}})(S_{\overline{e}}\circledast C)\vec{y},\vec{z}}_{\ell^2}&=\ip{(C\circledast A_{\overline{e}})\ip{\vec{y},\vec{x}}_{\ell^2}\overline{\varphi},\vec{z}}_{\ell^2}\\
&=\ip{\ip{\vec{y},\vec{x}}_{\ell^2}\ip{\overline{\varphi},\overline{\varphi}}_{\mu}\vec{x},\vec{z}}_{\ell^2}\\
&=\ip{\ip{\vec{y},\vec{x}}_{\ell^2},\vec{z}}_{\ell^2}\\
&=\ip{C\vec{y},\vec{z}}_{\ell^2}.\end{align*}
Thus by Theorem \ref{Th:main}, $\mu\in\mathcal{M}(K_C)$.


\providecommand{\bysame}{\leavevmode\hbox to3em{\hrulefill}\thinspace}
\providecommand{\MR}{\relax\ifhmode\unskip\space\fi MR }
\providecommand{\MRhref}[2]{%
  \href{http://www.ams.org/mathscinet-getitem?mr=#1}{#2}
}
\providecommand{\href}[2]{#2}

\end{document}